\newtheorem{thm}{Theorem}
\newtheorem{cor}[thm]{\bf{Corollary}}
\newtheorem{lem}[thm]{Lemma}
\newcommand{\ds}{\displaystyle}
\begin{document}

\begin{center}
    \textbf{ THE DIMENSIONS OF LU(3,q) CODES} \footnote[1]{This work was supported by Chat Yin Ho scholarship of Department of Mathematics at University of Florida.}\\
     \textsc{Ogul Arslan} \\
      \textsc{Department Of Mathematics}\\
       \textsc{University Of Florida}
\end{center}

\small
\textsc{ABSTRACT:}
A family of LDPC codes, called $LU(3,q)$ codes, has been constructed
from q-regular bipartite graphs.  Recently, P. Sin and Q. Xiang
determined the dimensions of these codes in the case that q is a power
of an odd prime.  They also obtained a lower bound for the dimension of
an $LU(3,q)$ code when q is a power of 2.  In this paper we prove
that this lower bound is the exact dimension of the $LU(3,q)$ code. The
proof involves the geometry of symplectic generalized quadrangles, the
representation theory of $Sp(4,q)$, and the ring of polynomials.\\

\normalsize
\begin{center}
    \textsc{1. Introduction}
\end{center}

Let $ P^*$ and $L^*$ be two sets in bijection with
 $\mathbb{F}_q^3$, where $q$ is any prime power. In [4], an element
 $(a,b,c) \in P^*$ is defined to be incident with an element
$[x,y,z] \in L^*$ if and only if $y=ax+b$ and $z=ay+c$. The binary
incidence matrix with rows indexed by $P^*$ and columns indexed by
$L^*$ is denoted by $H(3,q)$. The two binary codes having
$H(3,q)$ and its transpose as parity check matrices are called
$LU(3,q)$ codes in [4].

Let $V$ be a 4 dimensional vector space over the field
$\mathbb{F}_q$ of $q$ elements. We assume that $V$ has a nonsingular
alternating bilinear form $(v, v^{'})$, that is,  $(v, v^{'})$ is
linear in both components and $(v,v)=0$ for all $v$. Let
$Sp(4,q)$ be the symplectic group of linear automorphisms
preserving this form. We pick a symplectic basis $e_0,\, e_1,\, e_2,\,
e_3$ of $V$, with $(e_i,e_{3-i})=1$ for $i=0,1.$

We denote by $P$, the projective space $P(V)$, the space of one
dimensional subspaces of $V$. These one dimensional subspaces are
called the points of $P$. A subspace of V is called \textit{totally isotropic},
if $(v,v^{'})=0$ whenever $v$ and $v^{'}$ are both in the
subspace. We let $L$ be the set of totally isotropic 2-dimensional
subspaces of $V$, considered as lines in $P$. The pair $(P,L)$, with
the natural relation of incidence between the points and lines is
the symplectic generalized quadrangle $W(q)$. In this paper the term \textit{line} will
always mean an element of $L$.  One can see that
given any line $\ell$ and a point $p$ not on that line there is a
unique line that passes through $p$ and intersects $\ell$.

Fix a point $p_0 = \langle e_0\rangle \in P $ and a line $\ell_0 =
\langle e_0 , e_1 \rangle \in L$. For a point $p \in P$, we define
$p^{\perp}$ to be the set of points on all the lines that pass
through $p$. Thus, $p_0^{\perp}=\{(a:b:c:0)| \, a, b,  c \in
\mathbb{F}_q \}$ where $(a:b:c:d)$ are the homogeneous coordinates
of a point. Let $P_1$ be the set of points not in $p_0^{\perp}$ and $L_1$ be the set
of lines which do not intersect $\ell_0$. Hence other incidence systems of
interest are $(P_1,L_1)$, $(P,L_1)$ and $(P_1,L)$. Let
 $M(P,L)$ be the incidence matrix whose rows are indexed by $P$,
and the columns by $L$. Similarly, we get the incidence matrix
$M(P_1,L_1)$, which can be thought as a  submatrix of $M(P,L)$. It was proven in [8, appendix] that the incidence systems $(P^*,L^*)$ and $(P_1,L_1)$ are equivalent. Hence, $M(P_1,L_1)$ and its transpose are parity check matrices for $LU(3,q)$ codes.

The 2-ranks of $M(P,L)$ and $M(P_1,L_1)$ for $q$ a power of an odd
prime, were proven to be $\ds (q^3+2q^2+q+2)/2$ and
$\ds (q^3+2q^2-3q+2)/2$ in [1, theorem 9.4] and [8, theorem 1.1]
respectively.

The formulas for the case where $q$ is a power of 2 are quite
different. It was proven in  [7, theorem 1] that the 2-rank of
$M(P,L)$ is
$\ds 1+\left[ (1+ \sqrt {17} \,)/2\right]^{2t}+ \left[ (1- \sqrt
{17}\,)/2\right]^{2t}.$

In this paper we prove the following theorem. The formula in the theorem was conjectured in [8] based on the computer calculations of
J.-L. Kim.

\begin{thm} Assume $q=2^t$ for some positive integer $t$. The
2-rank of $M(P_1,L_1)$ is\\
$$ 1+\left( \frac{1+ \sqrt {17}}{2}\right)^{2t}+ \left( \frac{1- \sqrt {17}}{2}\right)^{2t} -2^{t+1}.$$
\end{thm}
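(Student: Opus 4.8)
The plan is to compute the rank of $M(P_1,L_1)$ as a controlled correction to the rank of $M(P,L)$, which by [7, theorem 1] equals $1+\lambda_+^{2t}+\lambda_-^{2t}$ with $\lambda_\pm=(1\pm\sqrt{17})/2$. Viewing the incidence matrix as an $\mathbb{F}_2$-linear map $M\colon\mathbb{F}_2^{L}\to\mathbb{F}_2^{P}$, $e_\ell\mapsto\sum_{p\in\ell}e_p$, the submatrix $M(P_1,L_1)$ is exactly the composite $\pi\circ M\circ\iota$, where $\iota\colon\mathbb{F}_2^{L_1}\hookrightarrow\mathbb{F}_2^{L}$ is the inclusion and $\pi\colon\mathbb{F}_2^{P}\twoheadrightarrow\mathbb{F}_2^{P_1}$ is the coordinate projection forgetting the $p_0^{\perp}$ coordinates. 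Writing $\mathcal{C}=\operatorname{im}M$ and $W=\operatorname{im}(M\circ\iota)=\operatorname{im}M(P,L_1)\subseteq\mathcal{C}$, a rank–nullity count gives
$$\operatorname{rank}M(P_1,L_1)=\dim W-\dim\bigl(W\cap\mathbb{F}_2^{p_0^{\perp}}\bigr),$$
so the total defect splits as $\operatorname{rank}M(P,L)-\operatorname{rank}M(P_1,L_1)=\delta_1+\delta_2$, where $\delta_1=\dim(\mathcal{C}/W)$ is the loss from deleting the columns of lines meeting $\ell_0$, and $\delta_2=\dim\bigl(W\cap\mathbb{F}_2^{p_0^{\perp}}\bigr)$ is the loss from deleting the rows in $p_0^{\perp}$. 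The theorem is then equivalent to proving $\delta_1+\delta_2=2^{t+1}$.

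The main structural leverage is that for even $q$ the symplectic quadrangle $W(q)$ is self-dual: there is a polarity $\sigma$ interchanging $P$ and $L$, preserving incidence, which we may normalize so that $\sigma(p_0)=\ell_0$. Then $\sigma$ carries $p_0^{\perp}$ to the set of lines meeting $\ell_0$ and interchanges the configurations $(P_1,L)$ and $(L_1,P)$, whence $\operatorname{rank}M(P_1,L)=\operatorname{rank}M(P,L_1)$; equivalently, deleting the $q^2+q+1$ points of $p_0^{\perp}$ from $M(P,L)$ costs the same rank as deleting the $q^2+q+1$ lines meeting $\ell_0$. Thus the two deletion directions are mirror images, and it suffices to understand one point-deletion defect and one line-deletion defect. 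To evaluate these I would pass to the representation theory of $Sp(4,q)$ in the defining characteristic: after extension to $\overline{\mathbb{F}}_2$ the permutation modules $\mathbb{F}_2^{P}$ and $\mathbb{F}_2^{L}$ have composition factors that are restricted-tensor products $\bigotimes_{i=0}^{t-1}L(\lambda_i)^{[i]}$ of Frobenius twists (Steinberg's theorem), and by the method of [1] and [7] each factor is realized inside a ring of reduced polynomials, with $\mathcal{C}$ cut out by explicit bounds on the monomial degrees. In this language the count $1+\lambda_+^{2t}+\lambda_-^{2t}$ is generated by a $2\times2$ transfer matrix with characteristic polynomial $x^2-x-4$ running over the $2t$ digit positions.

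With this model in place the determination of $\delta_1$ and $\delta_2$ becomes a bookkeeping problem about which composition factors survive $\iota$ and $\pi$. The subtlety is that $p_0^{\perp}$ and the set of lines meeting $\ell_0$ are not $Sp(4,q)$-stable; they are stable only under the Borel subgroup $B=\operatorname{Stab}(p_0,\ell_0)$, so I would restrict the entire filtration to $B$ and track the $B$-socle and $B$-head of the relevant factors. I expect $W\cap\mathbb{F}_2^{p_0^{\perp}}$ to be spanned by the images of a single $B$-orbit of extremal monomials, a space whose dimension is a product over the $t$ digit positions of a fixed local multiplicity and hence equal to $2^{t}$; the mirror analysis on the line side, transported by $\sigma$, should give the same count for $\delta_1$, so that $\delta_1+\delta_2=2\cdot2^{t}=2^{t+1}$.

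The hard part will be exactness: showing that the defect is neither more nor less than $2^{t+1}$. Composition-factor multiplicities alone do not settle this, because restricting the column set and projecting the row set can interact, and a surviving extension between factors could alter either $\delta_1$ or $\delta_2$. Concretely, one must prove that $\mathcal{C}/W$ and $W\cap\mathbb{F}_2^{p_0^{\perp}}$ are exactly the claimed modules — with no further cancellation and no extra boundary term — which requires pinning down the $B$-submodule lattice of the permutation module finely enough to control the boundary contributions of the transfer-matrix recursion, rather than merely counting factors. Identifying precisely which extremal monomials lie in $W$ and which are annihilated by $\pi$, and matching them across the polarity, is where the real work lies; once those boundary pieces are confirmed to be the asserted $q$-dimensional spaces on each side, self-duality closes the argument and yields the stated rank.
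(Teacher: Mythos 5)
Your high-level frame matches the paper's: the theorem is indeed proved as $\operatorname{rank}M(P_1,L_1)=\operatorname{rank}M(P,L)-2q$ with the formula of [7] as input, and your rank--nullity decomposition of the defect into $\delta_1=\dim\bigl(C(P,L)/C(P,L_1)\bigr)$ and $\delta_2=\dim\bigl(C(P,L_1)\cap\ker\pi_{P_1}\bigr)$ is correct bookkeeping. But the one concrete prediction your plan makes is false: the split is not $\delta_1=\delta_2=2^t$. The paper shows $\delta_2=q-1$ (Lemma 13) and, with the spanning results of Section 2 and the independence of $X_0\cup Y\cup Z$ (Lemma 3, quoted from [8]), $\delta_1=q+1$. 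Your self-duality argument is a non sequitur at exactly this point: the polarity $\sigma$ fixes the flag $(p_0,\ell_0)$, so it yields $\operatorname{rank}M(P_1,L)=\operatorname{rank}M(P,L_1)$ --- i.e.\ that deleting rows first or columns first produces the same pair $(\delta_1,\delta_2)$ --- but it does not interchange $\delta_1$ with $\delta_2$, and in fact they differ. The asymmetry is precisely the boundary effect you flagged as the hard part: $\ker\pi_{P_1}\cap C(P,L)$ has dimension $q+1$ (spanned by the $q+1$ lines through $p_0$), and its restriction to $\ell_0$ is a $2$-dimensional space spanned by the images of $\chi_{\ell_0}$ and $\chi_{p_0}$, whereas $C(P,L_1)$ restricts to zero on $\ell_0$; hence $\delta_2=(q+1)-2=q-1$, not $2^t$, and the functions achieving it are differences $\chi_{\ell}+\chi_{\ell'}$ of lines through $p_0$, not a $B$-orbit of extremal monomials.

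Beyond the wrong split, everything you defer ("the hard part will be exactness") is the actual content of the proof, and the paper settles it without any $B$-socle or composition-factor analysis. The lower bound needs no module theory at all: it is the linear independence of $X_0\cup Y\cup Z$ from [8]. For the upper bound the paper uses self-duality only once and geometrically --- for $q$ even the lines of $W(q)$ are regular, which produces an explicit grid of $2q$ lines of $L_1$ summing to $\chi_{\ell}+\chi_{\ell'}$ for any two lines $\ell,\ell'$ through a point of $\ell_0$, whence $X_0\cup Y\cup L_1$ spans $C(P,L)$ (Lemmas 5--7). The kernel dimension $q+1$ is then computed exactly by the $2$-adic digit method of [2]: any kernel element is $(1+x_3^{q-1})h+I$ with $h$ free of $x_3$ (Lemma 9); membership of $r^*$ in the span of $\beta$ forces every digit of every non-constant monomial of $h$ to have degree exactly $1$ (Lemma 10); and a digit $x_0$ is impossible, since the digit $x_0x_3+x_1x_2$ would force a companion monomial whose $x_3$-degree is strictly between $0$ and $q-1$, contradicting $r^*=(1+x_3^{q-1})h$ (Lemma 11). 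This pins $h$ to a space of dimension at most $q+1$, matched below by $X$. So your proposal, as written, cannot close: its symmetry shortcut must be discarded, and the two dimension computations it postpones are exactly where the proof lives --- and they turn out to be elementary and explicit rather than requiring the restricted-representation machinery you invoke.
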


Hence we get the following corollary.

\begin{cor} The dimension of the $LU(3,q)$ code for $q$ a power of 2 is \\ $$
2^{3t}+2^{t+1}-1-\left(\frac{1+\sqrt{17}}{2}\right)^{2t}-\left(\frac{1-\sqrt{17}}{2}\right)^{2t}.$$
\end{cor}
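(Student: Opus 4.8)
The plan is to derive the Corollary directly from the Theorem, using only the standard relation between the dimension of a linear code and the 2-rank of a parity check matrix; no further geometric or representation-theoretic input is required.

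First I would pin down the length of the code. As recalled in the Introduction, the incidence system $(P_1,L_1)$ is equivalent to $(P^*,L^*)$, and both $P^*$ and $L^*$ are in bijection with $\mathbb{F}_q^3$; hence $|P_1|=|L_1|=q^3=2^{3t}$. Thus $M(P_1,L_1)$ is a $q^3\times q^3$ matrix over $\mathbb{F}_2$, and the $LU(3,q)$ code --- which has $M(P_1,L_1)$ (or its transpose) as a parity check matrix --- is a binary code of length $n=q^3$.

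Next I would apply the elementary identity $\dim C = n - \mathrm{rank}_2(H)$ for a binary code $C$ of length $n$ with parity check matrix $H$. Since a matrix and its transpose have equal rank, the codes attached to $M(P_1,L_1)$ and to $M(P_1,L_1)^{T}$ share the same dimension, namely $q^3 - r$, where $r$ denotes the 2-rank of $M(P_1,L_1)$.

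Finally I would substitute the value of $r$ supplied by the Theorem and simplify $2^{3t}-r$:
$$2^{3t}-\left[\,1+\left(\frac{1+\sqrt{17}}{2}\right)^{2t}+\left(\frac{1-\sqrt{17}}{2}\right)^{2t}-2^{t+1}\,\right]=2^{3t}+2^{t+1}-1-\left(\frac{1+\sqrt{17}}{2}\right)^{2t}-\left(\frac{1-\sqrt{17}}{2}\right)^{2t},$$
which is exactly the asserted formula. The only point requiring any care is the bookkeeping of the code length, which is settled by the equivalence $(P_1,L_1)\cong(P^*,L^*)$ cited above; beyond that the Corollary is a formal rewriting of the Theorem, so I anticipate no substantive obstacle.
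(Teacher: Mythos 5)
Your proposal is correct and matches the paper exactly: the paper derives the Corollary from Theorem 1 with no further argument (``Hence we get the following corollary''), implicitly using the same facts you make explicit --- that $M(P_1,L_1)$ is a $q^3\times q^3$ matrix, that $\dim C = n - \mathrm{rank}_2(H)$, and that a matrix and its transpose have equal rank, so both $LU(3,q)$ codes have dimension $2^{3t}-r$. Your spelling out of the length bookkeeping via the equivalence $(P_1,L_1)\cong(P^*,L^*)$ is a sound elaboration of what the paper leaves tacit.
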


The dimension of the $LU(3,q)$ code for $q$ a power of an odd prime  was
proven to be $ (q^3-2q^2+3q-2)/2$ in [8, Corollary 1.2].

For the rest of the section we can assume that $q$ is an arbitrary
prime power.

We denote by $\mathbb{F}_2[P]$ the space of $\mathbb{F}_2$ valued
functions on $P$. We can think of elements of $\mathbb{F}_2[P]$ as
$q^3+q^2+q+1$ component vectors whose entries are indexed by the
points of $P$ so that for any function $f$, the value of each
entry is the value of $f$ at the corresponding point. The
characteristic function $\chi_p$ for a point $p \in P$ is the
function whose value is 1 at $p$, and zero at any other point. Thus,
$\chi_p$ is the $q^3+q^2+q+1$ component vector whose entry that
corresponds to $p$ is 1, and all the other entries are zero. The
characteristic functions for all the points in $P$ form a basis for
$\mathbb{F}_2[P]$. For any line $\ell \in L$, the characteristic
function $\chi_{\ell}$ is the function given by the sum of the $q+1$
characteristic functions of the points of $\ell$. The subspace of
$\mathbb{F}_2[P]$ spanned by all the $\chi_{\ell}$ is the
$\mathbb{F}_2$ code of $(P,L)$, denoted by $C(P,L)$. We can think
of it as the column space of $M(P,L)$. Most of the time we will
not make a distinction between the lines and the characteristic
functions of the lines. For example, we will say, let $C(P,L_1)$
be the subspace of $\mathbb{F}_2[P]$ spanned by the lines of
$L_1$. Let $C(P_1, L_1)$ denote the code of $(P_1,L_1)$ viewed as
a subspace of $\mathbb{F}_2[P_1],$ and let $C(P_1,L)$ be the
larger subspace of $\mathbb{F}_2[P_1]$ spanned by the restrictions
to $P_1$ of the characteristic functions of all lines of $L$.

We consider the natural projection map
$\pi_{P_1} : \mathbb{F}_2[P] \rightarrow \mathbb{F}_2[P_1]$
given by the restriction of functions to $P_1$. We denote its
kernel by $ker\,\pi_{P_1}$.

Let $Z \subset C(P,L_1)$ be a set of characteristic functions of
lines in $L_1$ which maps bijectively under $\pi_{P_1}$ to a basis
of $C(P_1, L_1)$. Let $X$ be the set of characteristic functions
of the $q+1$ lines passing through $p_0$, and let $X_0= X
\setminus {\ell_0}.$ Furthermore, we pick $q$ lines that intersect
$\ell_0$ at $q$ distinct points except $p_0$, and call the set
of these lines as $Y$. These sets $X, Y$, and $Z$ are disjoint,
also note that $X \subset ker\, \pi_{P_1}.$

The following lemma and corollary were proven in [8].
\begin{lem} $ X_0 \cup Y \cup Z$ is linearly independent over
$F_2$.
\end{lem}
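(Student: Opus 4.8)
The plan is to assume a nontrivial $\mathbb{F}_2$-linear dependence
$$\sum_{x\in X_0}a_x\chi_x+\sum_{y\in Y}b_y\chi_y+\sum_{z\in Z}c_z\chi_z=0$$
in $\mathbb{F}_2[P]$ and to peel off the three families one at a time, first by probing the relation at cleverly chosen points and then by restricting to $P_1$. The governing idea is that $X_0$, $Y$, and $Z$ can be decoupled because each is detected by a feature the other two lack.

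First I would eliminate the $Y$-terms by evaluating the relation at the points of $\ell_0$. Write $p_1,\dots,p_q$ for the $q$ points of $\ell_0$ other than $p_0$, and let $y_j\in Y$ be the line meeting $\ell_0$ at $p_j$. The key observation is that, among all the lines indexing $X_0\cup Y\cup Z$, the point $p_j$ lies on exactly one of them, namely $y_j$: a line of $X_0$ meets $\ell_0$ only at $p_0$ (otherwise it would share two points with $\ell_0$ and hence equal $\ell_0$, which is excluded), each $y_i$ meets $\ell_0$ only at $p_i$ by the quadrangle axiom that two distinct lines meet in at most one point, and no line of $Z\subset L_1$ meets $\ell_0$ at all. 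Evaluating the dependence at $p_j$ therefore yields $b_{y_j}=0$ for every $j$, so all the $Y$-coefficients vanish.

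Next I would remove the $X_0$-terms by applying $\pi_{P_1}$. Since $X\subset\ker\pi_{P_1}$ — every line through $p_0$ lies in $p_0^{\perp}$ and so meets $P_1$ in no point — the surviving relation $\sum_{x}a_x\chi_x+\sum_z c_z\chi_z=0$ restricts to $\sum_{z\in Z}c_z\,\pi_{P_1}(\chi_z)=0$ in $\mathbb{F}_2[P_1]$. By the defining property of $Z$, the images $\pi_{P_1}(\chi_z)$ form a basis of $C(P_1,L_1)$ and are in particular linearly independent, whence $c_z=0$ for all $z$. Finally the relation collapses to $\sum_{x\in X_0}a_x\chi_x=0$; because distinct lines through $p_0$ meet only in $p_0$, each $x\in X_0$ carries $q$ points lying on no other line of $X_0$, and evaluating at such a point gives $a_x=0$.

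I do not expect a single hard step here; the whole content is the decoupling carried out in the right order. The one place where a single pointwise evaluation does not suffice is the $Z$-family, since distinct lines of $L_1$ generally share points of $P_1$; there the argument must instead invoke the property built into the choice of $Z$, that $\pi_{P_1}$ carries it to a basis of $C(P_1,L_1)$. The main thing to verify with care is therefore the incidence bookkeeping in the first step — that each $p_j$ is covered exactly once by $X_0\cup Y\cup Z$ — which rests entirely on the generalized quadrangle property that two distinct lines meet in at most one point.
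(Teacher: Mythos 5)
Your proof is correct: evaluating at the points of $\ell_0\setminus\{p_0\}$ kills the $Y$-coefficients (since lines of $X_0$ meet $\ell_0$ only at $p_0$, distinct lines of $Y$ meet $\ell_0$ at distinct single points, and lines of $Z\subset L_1$ miss $\ell_0$ entirely), applying $\pi_{P_1}$ then kills the $Z$-coefficients because $X\subset\ker\pi_{P_1}$ and $Z$ maps bijectively to a basis of $C(P_1,L_1)$, and the final pointwise evaluation on $x\setminus\{p_0\}$ handles $X_0$, using that distinct lines through $p_0$ share only $p_0$. The paper itself gives no proof of this lemma, merely citing [8], and your decoupling argument is essentially the one proved there, so your write-up is complete as it stands.
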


Hence, $|X_0 \cup Y | =2q $, while
$|Z|=dim_{\,\mathbb{F}_2}\,C(P_1,L_1)$.

\begin{cor} Let $q$ be an arbitrary prime power. Then
$dim_{\,\mathbb{F}_2}LU(3,q) \geq q^3 -dim_{\,\mathbb{F}_2}C(P,L)
+2q.$

\end{cor}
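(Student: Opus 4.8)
The plan is to reduce the inequality to the linear independence already recorded in the lemma, after first pinning down the dimension of the code in terms of $dim_{\,\mathbb{F}_2}\, C(P_1,L_1)$. Since $M(P_1,L_1)$ (or its transpose) is a parity-check matrix for $LU(3,q)$ and its columns are indexed by $L_1$, the code has length $|L_1|$. A short count in $W(q)$ shows $|L_1| = q^3$: the total number of lines is $q^3+q^2+q+1$, while the lines meeting $\ell_0$ number $1 + (q+1)q = q^2+q+1$. Hence
$$ dim_{\,\mathbb{F}_2}\, LU(3,q) = q^3 - \mathrm{rank}_{\,\mathbb{F}_2}\, M(P_1,L_1). $$

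Next I would identify this rank. The column of $M(P_1,L_1)$ indexed by $\ell \in L_1$ is the restriction $\pi_{P_1}(\chi_\ell)$, so the column space is exactly $C(P_1,L_1)$ and
$$ \mathrm{rank}_{\,\mathbb{F}_2}\, M(P_1,L_1) = dim_{\,\mathbb{F}_2}\, C(P_1,L_1) = |Z|. $$
Now the key input is the lemma. The members of $X_0$, $Y$, and $Z$ are all characteristic functions of lines of $L$, hence all lie in $C(P,L) \subseteq \mathbb{F}_2[P]$; the three sets are disjoint and, by the lemma, their union is linearly independent over $\mathbb{F}_2$. Using $|X_0 \cup Y| = 2q$ and $|Z| = dim_{\,\mathbb{F}_2}\, C(P_1,L_1)$, this gives
$$ dim_{\,\mathbb{F}_2}\, C(P,L) \geq |X_0| + |Y| + |Z| = 2q + |Z|. $$

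Combining the displays yields
$$ dim_{\,\mathbb{F}_2}\, LU(3,q) = q^3 - |Z| \geq q^3 - \bigl( dim_{\,\mathbb{F}_2}\, C(P,L) - 2q \bigr) = q^3 - dim_{\,\mathbb{F}_2}\, C(P,L) + 2q, $$
which is the claimed bound. There is no serious obstacle here: all the genuine content sits in the lemma, and the remaining work is the bookkeeping identities $|L_1| = q^3$ and $\mathrm{rank}\, M(P_1,L_1) = dim_{\,\mathbb{F}_2}\, C(P_1,L_1) = |Z|$, together with the elementary observation that $X_0$, $Y$, and $Z$ embed into $C(P,L)$ so that their independence in $\mathbb{F}_2[P]$ forces the stated lower bound on $dim_{\,\mathbb{F}_2}\, C(P,L)$. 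The one point to watch is the direction of the inequality: enlarging the independent set inside $C(P,L)$ enlarges $dim_{\,\mathbb{F}_2}\, C(P,L)$, which forces $dim_{\,\mathbb{F}_2}\, C(P_1,L_1) \leq dim_{\,\mathbb{F}_2}\, C(P,L) - 2q$ and hence makes the codimension $q^3 - dim_{\,\mathbb{F}_2}\, C(P_1,L_1)$ larger, exactly as required.
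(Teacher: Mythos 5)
Your proposal is correct and follows essentially the same route as the paper (which cites [8] for this corollary): the bound comes from Lemma 3's linear independence of $X_0 \cup Y \cup Z$ inside $C(P,L)$, giving $\dim_{\mathbb{F}_2} C(P,L) \geq 2q + \dim_{\mathbb{F}_2} C(P_1,L_1)$, combined with $\dim_{\mathbb{F}_2} LU(3,q) = q^3 - \mathrm{rank}\, M(P_1,L_1)$. Your explicit verifications that $|L_1| = q^3$ and that the column space of $M(P_1,L_1)$ is exactly $C(P_1,L_1)$ (so its rank is $|Z|$) are precisely the bookkeeping the paper leaves implicit, and the inequality direction is handled correctly.
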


The proof of Theorem 1 follows from Lemma 3 and the dimension of $C(P,L).$  In section 2 we prove that $
 X_0 \cup Y \cup L_1$ spans $C(P,L)$. Then we show in section 3 that the span of $
 X_0 \cup Y \cup L_1$ and $
 X_0 \cup Y \cup Z$ are the same.

\begin{center}
    \textsc{2. The Grid Of Lines}\\
\end{center}
Unless otherwise is stated we assume that $q=2^t$ for the rest of the paper.
\begin{lem} Let $ \ell $ and $ \ell ^{'}$ be two lines passing through
$p \in \ell _{0} . $ Then $ \chi_{\ell} +\chi_{\ell^{'}} \in C(P,
L_1).$
\end{lem}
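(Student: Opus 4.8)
The plan is to establish the equivalent assertion that all lines through $p$ different from $\ell_0$ become equal modulo $C(P,L_1)$; I write $u\equiv v$ to mean $u-v\in C(P,L_1)$, and I denote by $\mathbf 1$ the all‑ones vector. (The case in which one of $\ell,\ell'$ equals $\ell_0$ is treated afterwards by the same circle of ideas.) Throughout I use the quadrangle axiom quoted in the introduction — for a point off a line there is a unique line through it meeting that line — and its consequence that, for $a\in\ell\setminus\{p\}$ and $b\in\ell'\setminus\{p\}$, the points $a$ and $b$ are non‑collinear and so have $q+1$ common neighbours.

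First I record a general reduction. For any point $a\notin\ell_0$ let $g_a$ be the unique line through $a$ meeting $\ell_0$; then the other $q$ lines through $a$ lie in $L_1$. Summing the characteristic functions of all $q+1$ lines through $a$ gives exactly $\chi_{a^{\perp}}$, since every point of $a^{\perp}$ other than $a$ lies on a unique line through $a$, while $a$ lies on all $q+1$ of them and $q+1$ is odd because $q$ is even. Hence $\chi_{g_a}\equiv\chi_{a^{\perp}}$. Taking $a\in\ell\setminus\{p\}$ gives $g_a=\ell$, so $\chi_\ell\equiv\chi_{a^{\perp}}$, and likewise $\chi_{\ell'}\equiv\chi_{b^{\perp}}$ for $b\in\ell'\setminus\{p\}$. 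It therefore suffices to prove $\chi_{a^{\perp}}\equiv\chi_{b^{\perp}}$.

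The engine for comparing the sets $a^{\perp}$ is the identity
$$\sum_{w\in m}\chi_{w^{\perp}}=\mathbf 1\qquad\text{for every line }m,$$
which holds because each point $x$ is collinear with an odd number of points of $m$: with all $q+1$ of them when $x\in m$, and with exactly one of them otherwise, both odd since $q$ is even. Choosing a common neighbour $d\neq p$ of $a$ and $b$, the lines $u=\langle a,d\rangle$ and $v=\langle b,d\rangle$ meet at $d$ and both lie in $L_1$ (neither is $\ell$ or $\ell'$, as $d\notin\ell\cup\ell'$). Adding the identity for $m=u$ and $m=v$ and cancelling the repeated term $\chi_{d^{\perp}}$ gives the exact relation $\chi_{a^{\perp}}+\chi_{b^{\perp}}=\sum_{c}\chi_{c^{\perp}}$, where $c$ runs over the interior points of $u$ and $v$, all lying off $\ell_0$; by the reduction above each satisfies $\chi_{c^{\perp}}\equiv\chi_{g_c}$.

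The remaining, and main, difficulty is to make these polar‑plane contributions collapse into $C(P,L_1)$. The way I expect to finish is to run the construction of the previous paragraph simultaneously from all points $a_i$ of $\ell\setminus\{p\}$ and $b_j$ of $\ell'\setminus\{p\}$, joining them through common neighbours into quadrilaterals, and then to pair these quadrilaterals so that each auxiliary point lying off $\ell\cup\ell'$ is produced an even number of times; everything off the two target lines then cancels and only $\chi_\ell+\chi_{\ell'}$ survives. I expect to verify this global cancellation in the coordinate model, taking $p=\langle e_0\rangle$ and $\ell_0=\langle e_0,e_1\rangle$ and computing the relevant lines explicitly; the essential points are that the matching of the auxiliary points of distinct quadrilaterals is a fixed‑point‑free involution, and that every unwanted term appears with the coefficient $q$ and hence vanishes because $q$ is even. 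Closing this cancellation is the heart of the proof, the bookkeeping above being routine.
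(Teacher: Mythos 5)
Your two preliminary identities are correct and are proved cleanly: for $a\notin\ell_0$ the sum of the $q+1$ lines through $a$ equals $\chi_{a^{\perp}}$ (since $q+1$ is odd), giving $\chi_{g_a}\equiv\chi_{a^{\perp}}$ modulo $C(P,L_1)$; and $\sum_{w\in m}\chi_{w^{\perp}}=\mathbf 1$ for every line $m$. The reduction of the lemma to $\chi_{a^{\perp}}+\chi_{b^{\perp}}\in C(P,L_1)$ is valid, as is the one-quadrilateral relation $\chi_{a^{\perp}}+\chi_{b^{\perp}}=\sum_{c}\chi_{c^{\perp}}$, and your checks that $u,v\in L_1$ are right. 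But at that point nothing has been gained: the right-hand side is $\equiv\sum_{c}\chi_{g_c}$, a sum of $2(q-1)$ lines meeting $\ell_0$ at assorted points, and showing that such a sum lies in $C(P,L_1)$ is a statement of at least the same difficulty as the lemma itself — pairs $\chi_{g_c}+\chi_{g_{c'}}$ are instances of the lemma only when $g_c$ and $g_{c'}$ pass through a \emph{common} point of $\ell_0$, which your construction does not control. Your final paragraph concedes exactly this: the global cancellation (simultaneous quadrilaterals from all points of $\ell\setminus\{p\}$ and $\ell'\setminus\{p\}$, a fixed-point-free involution on auxiliary points, the claim that each unwanted term occurs $q$ times) is announced as an expectation, with no construction and no verification. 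That cancellation is the entire content of the lemma, so this is a genuine gap, not routine bookkeeping.

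What is missing is precisely the geometric input the paper uses. Since $W(q)$ is self-dual for $q$ even [6, 3.2.1], its lines are regular, and regularity produces a grid: families $\Delta$ and $\Lambda$ of $q$ lines each, the lines of $\Delta$ meeting $\ell\setminus\{p\}$ in distinct points, the lines of $\Lambda$ meeting $\ell'\setminus\{p\}$ in distinct points, and every line of $\Delta$ meeting every line of $\Lambda$; a triangle argument (no triangles in a generalized quadrangle) places all $2q$ grid lines in $L_1$, and summing their characteristic functions kills the $q^2$ interior grid points in pairs, leaving exactly $\chi_{\ell}+\chi_{\ell'}$ in one stroke. Your proposed global matching, were it carried out, would essentially have to reconstruct this grid. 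Note also that line-regularity is where the hypothesis $q=2^t$ enters geometrically — it fails in $W(q)$ for odd $q$ — whereas your sketch uses evenness only through parity counts ($q+1$ odd, coefficients $q$ even), which by itself cannot certify that the required matching of auxiliary points exists. To repair the proof, either invoke regularity/self-duality to build the grid directly, or exhibit and verify the claimed involution explicitly in the coordinate model; as written, neither is done.
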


\begin{proof}
The points of the quadrangle $W(q)$ are regular as it is defined in [6, section 1.3, p.4]. When $q$ is even this quadrangle is known to be self-dual [6, 3.2.1]. Hence, the lines of $W(q)$ are regular for the case of even $q$. Thus one can show that there is a grid of lines between $\ell$ and $\ell^{'}$. This means there are two sets of lines $\Delta$ and $\Lambda$ such that each set has $q$ elements, each line in $\Delta$ intersects $\ell \setminus \{p\}$ and distinct lines of $\Delta$ intersects $\ell \setminus \{p\}$ in distinct points. Similarly, each line in $\Lambda$ intersects $\ell^{'} \setminus \{ p\}$ and distinct lines of $\Lambda$ intersects $\ell^{'} \setminus \{p\}$ in distinct points. Moreover, every line of $\Delta$ intersects every line of $\Lambda$.\\

\begin{picture}(100,140)

\thicklines

 \put(0,110){$\ell^{'}$}
 \put(110,25){$\ell$}
 \put(-5,10){$p$}

 \put(13,20){\line(1,0){100}} 
 \put(10,23){\line(0,1){100}} 

 \multiput(40,20)(30,0){3}{\circle*{5}}
 \multiput(10,50)(0,30){3}{\circle*{5}}
 \multiput (40,50) (0,30){3}{\circle{5}}
 \multiput (70,50) (0,30){3}{\circle{5}}
 \multiput (100,50) (0,30){3}{\circle{5}}
 \put(10,20){\circle{5}}

 \multiput(40,20)(0,16){7}{\line(0,1){3}} 
 \multiput(70,20)(0,16){7}{\line(0,1){3}} 
 \multiput(100,20)(0,16){7}{\line(0,1){3}} 

 \multiput(10,50)(16,0){7}{\line(1,0){3}} 
 \multiput(10,80)(16,0){7}{\line(1,0){3}} 
 \multiput(10,110)(16,0){7}{\line(1,0){3}} 
\end{picture}

We add characteristic functions of these lines and get

$$ \sum_{\gamma \in \Delta \cup \Lambda} \chi_{\gamma}=
\chi_{\ell} + \chi_{\ell^{'}} \in C(P, L_1).$$

\end{proof}

\begin{lem} For any choice of $Y$, $\ell \in L \setminus \{{\ell_0}\} $
and \textbf{1} are in the span of $ X_0 \cup Y \cup L_1 $.
\end{lem}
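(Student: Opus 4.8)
The plan is to prove the two assertions separately: first that every $\ell\in L\setminus\{\ell_0\}$ lies in the span $W$ of $X_0\cup Y\cup L_1$, and then that $\mathbf{1}\in W$. For the lines I would argue by cases according to how $\ell$ meets $\ell_0$. If $\ell\in L_1$ or $\ell\in X_0$ there is nothing to prove. Otherwise $\ell$ meets $\ell_0$ in a unique point $p$. If $p=p_0$, then $\ell$ is one of the $q+1$ lines through $p_0$ other than $\ell_0$, so $\ell\in X_0$. If $p\neq p_0$, then by the construction of $Y$ there is a unique line $\ell_p\in Y$ through $p$, and since $\ell$ and $\ell_p$ are two lines through the point $p\in\ell_0$ with $\ell,\ell_p\neq\ell_0$, Lemma 5 gives $\chi_\ell+\chi_{\ell_p}\in C(P,L_1)$; hence $\chi_\ell\in\mathrm{span}(Y\cup L_1)\subseteq W$. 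This exhausts every $\ell\neq\ell_0$, and since conversely the generators $X_0,\,Y,\,L_1$ all lie in $L\setminus\{\ell_0\}$, it identifies $W$ with $\mathrm{span}(L\setminus\{\ell_0\})$.

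It remains to show $\mathbf{1}\in W$. Because each point lies on exactly $q+1$ lines and $q+1$ is odd, one has $\sum_{\ell\in L}\chi_\ell=\mathbf{1}$; consequently $\mathbf{1}\in W$ is equivalent to $\chi_{\ell_0}\in W$, since $\mathbf{1}+\chi_{\ell_0}=\sum_{\ell\neq\ell_0}\chi_\ell$ already lies in $W$. To place $\ell_0$ inside a linear dependence I would use the following parity fact: for any fixed line $m$, the characteristic functions of the lines disjoint from $m$ sum to zero in $\mathbb{F}_2[P]$. Indeed, at a point $x\in m$ every line through $x$ meets $m$, while at a point $x\notin m$ exactly one line through $x$ meets $m$ (the unique line through $x$ meeting $m$ guaranteed by the quadrangle, as noted in the introduction), so the number of lines through $x$ disjoint from $m$ is $0$ or $q$, which is even in either case.

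Finally I would choose $m$ to be any line disjoint from $\ell_0$, which exists since $L_1\neq\emptyset$. Then $\ell_0$ is itself disjoint from $m$, so the parity fact yields $\sum_{\ell\cap m=\emptyset}\chi_\ell=0$ with $\ell_0$ among the summands; thus $\chi_{\ell_0}$ equals the sum of the remaining terms, all of which are lines $\neq\ell_0$ and therefore lie in $W$. This gives $\chi_{\ell_0}\in W$, whence $\mathbf{1}\in W$, completing the proof for every choice of $Y$. The delicate point is precisely the treatment of $\mathbf{1}$: Lemma 5 cannot be applied with $\ell=\ell_0$, because the grid transversals would then meet $\ell_0$ and fail to lie in $C(P,L_1)$. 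The argument must instead exhibit $\ell_0$ as redundant, and the disjoint-line dependence is exactly the device that does this.
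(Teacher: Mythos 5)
Your proof is correct, and its first half (expressing each $\ell\neq\ell_0$ via the unique $\ell_p\in Y$ through $p=\ell\cap\ell_0$ and Lemma 5) is exactly the paper's argument. For $\mathbf{1}$, however, you take a genuinely different route. The paper fixes $\ell^*\in L_1$ and sums the characteristic functions of \emph{all lines meeting} $\ell^*$ (including $\ell^*$ itself): at a point of $\ell^*$ this counts $q+1$ lines and at any other point exactly one line (the GQ axiom), so since $q+1$ is odd the sum is $\mathbf{1}$ directly, and every summand avoids $\ell_0$ because $\ell^*\in L_1$. You instead use the complementary dependence --- lines \emph{disjoint from} a fixed $m\in L_1$ sum to zero, by the $0$-or-$q$ count you give --- to extract $\chi_{\ell_0}$ as a combination of lines $\neq\ell_0$, and then recover $\mathbf{1}$ from the global identity $\sum_{\ell\in L}\chi_\ell=\mathbf{1}$. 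These devices are equivalent in content (the star identity, your disjoint-line identity, and the global sum identity are linearly related, any two giving the third, all resting on $q$ even plus the GQ axiom), but your ordering buys something concrete: you obtain $\chi_{\ell_0}\in\langle X_0, Y, L_1\rangle$ en route, which is precisely the paper's Lemma 7, proved there afterwards from $\mathbf{1}$ via $\chi_{\ell_0}=\mathbf{1}+\sum_{\ell\cap\ell_0\neq\emptyset,\,\ell\neq\ell_0}\chi_\ell$; your version would render that lemma redundant. The paper's version is slightly more economical for the stated lemma alone, since it reaches $\mathbf{1}$ in a single stroke without passing through $\ell_0$. Your closing observation --- that Lemma 5 cannot be applied with $\ell=\ell_0$ because the grid transversals would meet $\ell_0$ and so fall outside $L_1$ --- correctly identifies why $\ell_0$ needs separate treatment in both arguments.
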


\begin{proof} It is enough to show that any line $\ell$ in $L \setminus (X \cup L_1)$ is in the span of $ X_0 \cup Y \cup L_1 $. It is immediate that $\ell$
intersects $\ell_0$ at a point $p$ other than $p_0$. Let $\ell^{'}$ be the line in
$Y$ that intersect $\ell_0$ at $p$.
  Then, by the previous result $\chi_{\ell}+\chi_{\ell^{'}}
$ is in the span of $ L_1$. Thus $
( \chi_{\ell}+\chi_{\ell^{'}} ) +\chi_{\ell^{'}}= \chi_{\ell}$ is in
the span of $Y \cup L_1$. Thus any line in $ L \setminus
\{\ell_0\}$ can be written as a linear combination of the lines in
$X_0 \cup Y \cup L_1.$

In order to prove the second part of the lemma, we pick a line in $L_1$, say $\ell^{*}$.
Since $\ell^*$ does not intersect $\ell_0$,  all the lines that
intersect $\ell^*$ are in $\langle X_0, Y, L_1 \rangle $. Hence we add all these lines, including $\ell^*$, to get \textbf{1}.

\end{proof}

\begin{lem}  $\ell_0$ is contained in the span of $X_0 \cup Y \cup L_1.$
\end{lem}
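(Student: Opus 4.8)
The plan is to obtain $\chi_{\ell_0}$ as a sum of the all-ones function $\mathbf{1}$ together with characteristic functions of lines different from $\ell_0$, because the preceding lemma has already placed all of these inside $\langle X_0, Y, L_1\rangle$. The only new ingredient I need is a single global counting identity in $\mathbb{F}_2[P]$, whose whole force comes from $q$ being even.

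First I would record the incidence count for $W(q)$. Since $W(q)$ is a generalized quadrangle of order $(q,q)$, every point of $P$ lies on exactly $q+1$ totally isotropic lines. Summing the characteristic functions of all lines of $L$, each point $p\in P$ is then counted exactly $q+1$ times; because $q=2^t$, the integer $q+1$ is odd, so in $\mathbb{F}_2[P]$ the whole line-sum collapses to
$$\sum_{\ell \in L} \chi_\ell = \mathbf{1}.$$
Next I would peel off the single term $\chi_{\ell_0}$. Working in characteristic $2$, the identity above rearranges to
$$\chi_{\ell_0} = \mathbf{1} + \sum_{\ell \in L \setminus \{\ell_0\}} \chi_\ell.$$
By the previous lemma, $\mathbf{1}\in\langle X_0,Y,L_1\rangle$, and for each $\ell \in L\setminus\{\ell_0\}$ the function $\chi_\ell$ lies in that same span. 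The right-hand side is therefore a finite sum of elements of $\langle X_0,Y,L_1\rangle$, so $\chi_{\ell_0}\in\langle X_0,Y,L_1\rangle$, which is exactly the assertion.

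I do not expect a genuine obstacle here: the entire argument reduces to the parity count, and that count is decisive precisely because $q$ is even, matching the standing assumption of the section. The one point I would take care to state cleanly is the invocation of the order $(q,q)$ of $W(q)$, so that the ``$q+1$ lines through each point'' fact is properly justified rather than assumed. As a consistency check one can note the local analogue $\sum_{\ell \ni p_0}\chi_\ell = \chi_{p_0^\perp}$, which yields $\chi_{\ell_0}=\chi_{p_0^\perp}+\sum_{\ell\in X_0}\chi_\ell$ and hence, once $\chi_{\ell_0}$ is known to lie in the span, also puts $\chi_{p_0^\perp}$ there; but the global line-sum is the cleanest route to the stated conclusion.
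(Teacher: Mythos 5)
Your proof is correct, and it rests on a slightly different counting identity than the paper's. The paper's proof is the single display $\chi_{\ell_0} = \mathbf{1} + \sum_{\ell\cap\ell_0\neq\emptyset,\,\ell\neq\ell_0}\chi_{\ell}$, which sums only the $q(q+1)$ lines concurrent with $\ell_0$: each point of $\ell_0$ lies on exactly $q$ such lines (even, hence contributing $0$), while each point off $\ell_0$ lies on exactly one, by the unique-transversal property of the quadrangle quoted in the introduction. You instead sum over all of $L$, using only that every point of $W(q)$ lies on $q+1$ lines, an odd number since $q=2^t$, to get $\sum_{\ell\in L}\chi_{\ell}=\mathbf{1}$ and then move $\chi_{\ell_0}$ across. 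Both identities are valid --- their difference is $\sum_{\ell\in L_1}\chi_{\ell}=0$, since points of $\ell_0$ lie on no line of $L_1$ and points off $\ell_0$ lie on exactly $q$ of them --- both hinge equally on $q$ being even, and both finish identically by feeding $\mathbf{1}$ and the functions $\chi_{\ell}$, $\ell\neq\ell_0$, into Lemma 6. What your route buys: it needs only the point-degree $q+1$ of $W(q)$, not the generalized quadrangle axiom, and it makes explicit the parity count that the paper's one-line display leaves unjustified. What the paper's route buys: its identity is exactly the device already used in the second part of the proof of Lemma 6 (summing all lines meeting a fixed line $\ell^*$ to produce $\mathbf{1}$), so Lemmas 6 and 7 there run on a single observation specialized to $\ell^*=\ell_0$.
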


\begin{proof} $$ \chi_{\ell_0} = \textbf{1} + \sum_{\ell \cap \ell_0
\not = \emptyset , \ell \not = \ell_0} \chi_{\ell} \in \langle
X_0, Y, L_1 \rangle.$$
\end{proof}

Thus any line $\ell \in L$ is in the span of $X_0 \cup Y \cup
L_1$. It remains to show the span of $X_0 \cup Y \cup L_1$ is the
same as the span of $X_0 \cup Y \cup Z.$\\
In the next section we introduce a new way of representing the
lines of $P$.\\

\begin{center}
\textsc{3. The Polynomial Approach}\\
\end{center}
Let $k$ denote the field $ \mathbb{F}_q $. Consider the space, $k[V]$, of $k$-valued functions on $V$, where
the elements of this space are  vectors with  $q^4$ components on $k$.

Let $ R= k[x_0,x_1,x_2,x_3]$, be the ring of polynomials in four
indeterminates. We can think of any polynomial in R as
a function in $k[V].$ In order to find the value of $f(x_0,x_1,x_2,x_3) \in R$ at
$v=(a_0,a_1,a_2,a_3) \in V$ we just substitute $x_i$ with $a_i$ for all
$i$. Thus, there is an homomorphism from $R$ to $k[V]$ that maps
every polynomial to a function. One can prove that this homomorphism is in fact an isomorphism
between $R/I$ and $k[V]$, where $I$ is the ideal generated by $\{
(x_0^q-x_0), (x_1^q-x_1), (x_2^q-x_2), (x_3^q-x_3) \} $.

For each $f + I \in R/I$, there is a unique polynomial
representative $f^* \in R$ such that each indeterminate in $f^*$
is of degree less than or equal to $q-1$ and $f + I = f^* +I$. Let
$R^*$ be the set of all such representatives.
By a \textit{term} of an element $f+I $ of $R/I$ we mean a monomial of its
representative $f^*$ in $R^*$.

Let $k[V \setminus \{0\}] $ be the space obtained by restricting
functions of $k[V]$ to $V\setminus \{0\}$, and $k[V \setminus
\{0\}]^{k^{\times}} $ be the subspace of $k[V \setminus \{0\}]$
fixed by $k^{\times}$. In other words, $k[V \setminus
\{0\}]^{k^{\times}} $ is the space of functions $f$ in $k[V \setminus \{0\}]$ such that $f(\lambda v )=f(v)$ for every $ v \in V \setminus \{0\}$, and  $\lambda \in k^{\times}$. Thus, for each $p = \langle v \rangle
\in P $ the value of $f$  on $p \setminus \{0\}$ will be constant.
Hence $f$ can be thought as a function on $P$. On the other hand,
any function $f \in k[P]$ can be extended to a function $\bar{f}
\in k[V \setminus \{0\}]^{k^{\times}} $ by defining the value of
$\bar{ f }(v)$ to be the same as $f(p)$, where $p$ is the point so that $v \in p$. Thus,
there is a one to one correspondence between $k[P]$ and $k[V
\setminus \{0\}]^{k^{\times}}$, and $k[P]$ can be embedded into
$k[V]^{k^{\times}}$.

Since $k[V] \simeq R/I $, there is a space $R_P$ which is
isomorphic to  $k[P]$, and that can be embedded in to $(R /
I)^{k^{\times}}$. Elements of $R_P$ are classes of polynomials.
Let $R_P^* \subseteq R^*$ be the set of representatives of
elements of $R_P$. For any element $g+I$ of $R_P$ the unique
representative $g^*$ in $R_P^*$ will be a homogeneous polynomial
whose terms have degrees which are multiples of $(q-1)$. In this case, the
set of monomials of the form
$x_0^{m_0}x_1^{m_1}x_2^{m_2}x_3^{m_3}$ in $R^*_P$ where $m_0+m_1+m_2+m_3$ is a multiple of $(q-1)$
 will map to a basis of $R_P.$
Since these monomials are in $R_P^*$, each $m_i
\leq q-1.$

For a point $p \in P$, let $\delta _p^* $ be the polynomial in
$R_P^*$ that corresponds to the characteristic function $\chi _p$
of $p$ in $k[P]$. So,

\[ \delta_p^*(v) = \left\{ \begin{array}{ll}
         1 & \mbox{if $\langle v \rangle=p$},\\
        0 & \mbox{if $\langle v \rangle \not = p$}.\end{array} \right.  \]

For a line $ \ell \in L$, let $\delta _{\ell}^* $ be the
polynomial in $R_P^*$ that corresponds to the characteristic
function $\chi _{\ell}$ of $\ell$ in $k[P]$. So,

\[ \delta_{\ell}^*(v) = \left\{ \begin{array}{ll}
         1 & \mbox{if $\langle v \rangle \in \ell$},\\
        0 & \mbox{if $\langle v \rangle \not \in \ell$}.\end{array} \right.  \]

\textbf{Example:} Let $\ell_0 = \langle (1:0:0:0),(0:1:0:0)
\rangle$, then $\delta^*_{\ell_0}=(1+x_2^{q-1})(1+x_3^{q-1})$
would be the characteristic function for $\ell_0$.

The symplectic group $Sp(4,q)$ acts transitively on the characteristic functions of the lines of $L$,
so it also acts transitively on the classes of characteristic
functions of lines in $R_P$.
Hence, by applying the elements of $Sp(4,q)$ to $\delta_{ \ell_0 }^*$,
we can obtain all $q^3+q^2+q+1$
 polynomials corresponding to the characteristic functions of lines of $L$.
The code $C(P,L)$ is spanned by the classes of these polynomials.
So $C(P,L)$ is spanned by the classes of polynomials of the form
$ (1+ (\sum_{i=0}^3
a_ix_i)^{q-1})(1+ (\sum_{i=0}^3 b_ix_i)^{q-1}) + I \textit{,
where } a_i, b_i \in k$  such that the 2-dimensional subspace of $V$ given by
$ a_0x_0+a_1x_1+a_2x_2+a_3x_3=0$ and $b_0x_0+b_1x_1+b_2x_2+b_3x_3=0$ is a line in $L$.
Therefore for $c + I \in C$, $c^*$ is a homogeneous polynomial
whose terms have degrees $0, q-1$ or $ 2(q-1)$. We also note that
the degree of any variable in $c^*$ must be less than or equal to
$q-1$.\\

3.1. \textbf{Another way of representing the polynomials in $R^*$ :}\\

The method of this section was first introduced in [2].

\textbf{Definition:} We call a polynomial $f \in R^*$
\textit{digitizable} if it is possible to find square free
homogeneous polynomials, $f_i$, called \textit{digits} of $f$, so that
$f=f_0f_1^2f_2^{2^2} \ldots f_{t-1}^{2^{t-1}}$. In this case, we
denote $f$ as $ [f_0,f_1, \ldots f_{t-1} ]$, and call this
notation the 2-adic t-tuple of $f$.

\textbf{Example:} Every monomial $m =
x_0^{m_0}x_1^{m_1}x_2^{m_2}x_3^{m_3}$ in $R^*$ is digitizable.
Since each $m_i \leq q-1$, we can find $n_{i,j} \in \{0, 1\}$ such
that;
$$ m_i =   n_{ i, 0} + 2 n_{ i, 1} + 2^2 n_{ i, 2} + \ldots + 2
^{t-1} n_{ i, t-1} \text{ \hspace{5mm} for all } i .$$
The 2-adic t-tuple for $m$ is $[f_0, f_1, \ldots , f_{t-1}]$ where
$ f_i = x_0^{ n _{0,i} }x_1^{ n _{1,i} }x_2^{ n _{2,i} }x_3^{ n
_{3,i} } \text { \hspace { 5mm} for all } i. $

\textbf{Example:} For $q=8$, $f =
x_0^3x_1x_3^6+x_0x_1^3x_2^2x_3^4$ is digitizable with digits $ f_0
= x_0x_1, f_1= x_0x_3+x_1x_2 , f_2= x_3 $. Note that,
\begin{eqnarray*}
 f &= &[x_0x_1,x_0x_3+x_1x_2, x_3] \\
   &= & [x_0x_1, x_0x_3, x_3] + [x_0x_1,x_1x_2,x_3]\\
\end{eqnarray*}

Let $\beta := \{ [f_0,f_1, \ldots , f_{t-1}] +I | \,\, f_i \in \{ 1, x_0,
x_1, x_2, x_3, x_0x_1, x_0x_2, x_1x_3, x_2x_3,x_0x_1x_2, \\ x_0x_1x_3,x_0x_2x_3,x_1x_2x_3, x_0x_3+x_1x_2 \}
\}$

\begin{lem} The code $C(P,L) $ lies in the  span of $\beta$.
\end{lem}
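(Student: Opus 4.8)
The plan is to show that each of the spanning polynomials of $C(P,L)$ produced above lies in $\langle\beta\rangle$. Recall that $C(P,L)$ is spanned by the classes of $\delta_\ell^* = (1+\alpha^{q-1})(1+\gamma^{q-1})$, where $\alpha=\sum_i a_ix_i$ and $\gamma=\sum_i b_ix_i$ cut out a line $\ell\in L$, i.e. a totally isotropic $2$-space. Expanding, $\delta_\ell^* = 1 + \alpha^{q-1} + \gamma^{q-1} + (\alpha\gamma)^{q-1}$, so it suffices to place each of the four summands in $\langle\beta\rangle$.

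First I would dispose of the easy terms. Since $q-1=\sum_{k=0}^{t-1}2^k$ and we are in characteristic $2$, $\alpha^{q-1}=\prod_{k=0}^{t-1}\alpha^{2^k}$, which in the $2$-adic notation is the tuple $[\alpha,\alpha,\ldots,\alpha]$. The tuple bracket is additive (indeed $2^k$-semilinear) in each slot, so writing $\alpha=\sum_i a_ix_i$ and expanding each slot turns $\alpha^{q-1}$ into a $k$-linear combination of tuples $[x_{i_0},\ldots,x_{i_{t-1}}]$ whose digits are the single variables $x_0,\ldots,x_3\in D$; hence $\alpha^{q-1},\gamma^{q-1}\in\langle\beta\rangle$, and $1=[1,\ldots,1]\in\beta$.

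The cross term is where the geometry enters. In each slot the digit is $\alpha\gamma=\sum_i a_ib_i\,x_i^2+\sum_{i<j}(a_ib_j+a_jb_i)\,x_ix_j$. The condition that $\ell$ be totally isotropic translates, after identifying $V^{*}$ with $V$ via the form, into the single relation $a_0b_3+a_3b_0=a_1b_2+a_2b_1$; this makes the coefficients of $x_0x_3$ and $x_1x_2$ equal, so the square-free part of $\alpha\gamma$ is exactly a $k$-combination of the five quadratic digits $x_0x_1,\,x_0x_2,\,x_1x_3,\,x_2x_3,\,x_0x_3+x_1x_2$ appearing in $D$. This is the reason those five, and only those five, quadratics were built into $\beta$. (An equivalent framing is to try to show $\langle\beta\rangle$ is an $Sp(4,q)$-submodule containing $\delta^*_{\ell_0}$, but one checks that an element such as $x_0x_3+x_1x_2$ is not carried into $\langle D\rangle$ by a transvection — a square term escapes — so that route runs into the same obstruction described next.)

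What remains, and what I expect to be the crux, is the diagonal part $\sum_i a_ib_i\,x_i^2=L^2$ with $L$ linear: as a digit it is not square-free and so is not directly admissible. Here I would use $2$-adic carrying — a factor $L^2$ occupying slot $k$ may be replaced by a factor $L$ in slot $k+1$, with the top slot wrapping back to slot $0$ through $x_i^{q}\equiv x_i\pmod I$ — and then re-expand. The hard part is the bookkeeping: a carried linear factor landing in a slot that already holds one of the quadratic digits produces a product of a linear and a quadratic form, which is neither square-free nor in $D$, so it must itself be decomposed and carried again. I would control this by an induction on $t$ (comparing the patterns for $q$ and $q/2$), or by a direct finite analysis of exactly which digit-products can occur, repeatedly invoking the isotropy relation, to show that after all carries and reductions modulo $I$ every surviving tuple has all of its digits in $D$, whence $(\alpha\gamma)^{q-1}\in\langle\beta\rangle$ as well.
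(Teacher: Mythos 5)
Your handling of three of the four summands is correct: $1=[1,\dots,1]\in\beta$; the multilinear (indeed $2^k$-semilinear) expansion of $\alpha^{q-1}=[\alpha,\dots,\alpha]$ into tuples with single-variable digits is valid in characteristic $2$; and your computation that total isotropy of $\ell$ amounts to $a_0b_3+a_3b_0=a_1b_2+a_2b_1$, forcing the square-free part of $\alpha\gamma$ into the span of the five admissible quadratic digits, is exactly the right observation and correctly explains why the digit set was designed with the bound pair $x_0x_3+x_1x_2$. You should know, however, that the paper does not prove this lemma directly at all: its entire proof is a citation of Theorem 5.2 of Chandler--Sin--Xiang [2] (with $m=2$, $r=2$). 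So what you are attempting is a self-contained reconstruction of that theorem in the special case at hand.

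And that is where the genuine gap sits, precisely where you flag it: the claim that $(\alpha\gamma)^{q-1}\in\langle\beta\rangle$ after ``$2$-adic carrying'' is a plan, not a proof. The difficulty is not mere bookkeeping. When a carried linear factor $L$ lands in a slot already occupied by the quadratic part, the digit becomes a cubic such as $L\cdot(x_0x_3+x_1x_2)$; reducing it forces you to split off squares and carry again, and a priori this splitting can strand a bare $x_0x_3$ or $x_1x_2$ as a digit of some tuple --- neither of which lies in $D$. To conclude, one must show that the isotropy relation propagates through every stage of the reduction so that these two monomial digits always reappear paired (or, more accurately, that the \emph{sums} of tuples produced, rather than individual tuples, reassemble into $\beta$-elements --- your phrase ``every surviving tuple has all of its digits in $D$'' is stronger than what termwise expansion can deliver). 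One must also verify that the wrap-around carry from the top slot, which interacts with reduction modulo $I$ via $x_i^q\equiv x_i$, terminates. Neither the induction on $t$ nor the finite case analysis you propose is carried out, and this unexecuted step is exactly the nontrivial content of [2, Theorem 5.2]; without it the lemma is not established.
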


\begin{proof} This just a special case of the theorem 5.2 in [2] with m=2 and r=2.

\end{proof}

 3.2. \textbf{The kernel:}\\

$k[P_1]$ is the space of $k$ valued functions on $P_1$. Let
$R_{P_1}$ be the space of classes of polynomials that corresponds
to $k[P_1]$. As before we use $R^*_{P_1}$ to denote the set of
unique representatives of elements of $R_{P_1}$.

In this section
we will find the dimension of $C(P,L) \cap ker\, \pi_{P_1}$, where
$\pi_{P_1}: R_{P} \rightarrow R_{P_1}$ is the projection map.
Elements of $ker\, \pi_{P_1}$ are the classes of polynomials whose
values at the points of $P_1$ are zero. Any element of the
form $(1+x_3^{q-1})f + I$ is in the kernel. On the other hand,
$f+I= (x_3^{q-1}+1)f+I $ for any class $f+I \in ker\, \pi_{P_1} $. This
is because for any point $p$, the value of $(x_3^{q-1}+1)f$ is zero if
$p \in P_1$, and $f(p)$ otherwise.

\begin{lem} Any element of $ker\, \pi_{P_1} $ can be written in the form
$(1+x_3^{q-1})h +I$ where $h$ is in $R_P^*$ and $h$ does not contain
indeterminate $x_3$.
\end{lem}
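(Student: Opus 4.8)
The plan is to start from a class $f+I\in ker\,\pi_{P_1}$ and exploit the observation made just above the statement, namely that $f+I=(1+x_3^{q-1})f+I$ holds for every element of the kernel. Replacing $f$ by its reduced representative $f^{*}\in R_P^{*}$, I would expand $f^{*}$ as a polynomial in the single variable $x_3$, writing $f^{*}=\sum_{j=0}^{q-1} f_j x_3^{j}$ with each coefficient $f_j\in k[x_0,x_1,x_2]$ and with every variable occurring to degree at most $q-1$ (since $f^{*}\in R^{*}$). The aim is to show that multiplication by $(1+x_3^{q-1})$ annihilates every coefficient except the $x_3$-free one, so that $h:=f_0$ becomes the desired polynomial.

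The heart of the argument is a short computation modulo $I$. Using the relation $x_3^{q}\equiv x_3$, each monomial $f_j x_3^{q-1+j}$ with $1\le j\le q-1$ reduces to $f_j x_3^{j}$, while the $j=0$ term $f_0 x_3^{q-1}$ is already in reduced form. Hence $x_3^{q-1}f^{*}\equiv f_0 x_3^{q-1}+\sum_{j=1}^{q-1} f_j x_3^{j}\pmod I$, and on adding $f^{*}$ itself the sums $\sum_{j=1}^{q-1} f_j x_3^{j}$ occur twice and cancel in characteristic $2$. This leaves $(1+x_3^{q-1})f^{*}\equiv (1+x_3^{q-1})f_0\pmod I$. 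Combining this with $f+I=(1+x_3^{q-1})f^{*}+I$ (valid because $f\equiv f^{*}$ and $f+I$ lies in the kernel) yields $f+I=(1+x_3^{q-1})f_0+I$, which is precisely the required form.

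It then remains to verify that $h=f_0$ genuinely lies in $R_P^{*}$. This is automatic: every monomial of $f^{*}$ has total degree divisible by $q-1$, and the monomials surviving in $f_0$ are exactly those monomials of $f^{*}$ that are free of $x_3$, so they inherit both the divisibility of their degree by $q-1$ and the bound $\le q-1$ on each exponent. Thus $f_0$ is the reduced representative of an element of $R_P$ that does not involve $x_3$, as wanted.

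I expect the only delicate point to be the bookkeeping in the reduction step: for $1\le j\le q-1$ the exponent $q-1+j$ satisfies $q\le q-1+j\le 2q-2$, so a single application of $x_3^{q}\equiv x_3$ brings it down to $x_3^{j}$ with $j\le q-1$, keeping the product inside $R^{*}$; one must take care neither to over-reduce nor to under-reduce. Once this is in place, the characteristic-$2$ cancellation that isolates the $x_3$-free part $f_0$ is immediate, and the lemma follows.
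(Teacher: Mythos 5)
Your proof is correct and follows essentially the same route as the paper: both start from the observation that $f+I=(1+x_3^{q-1})f+I$ for kernel elements, and both use $x_3^{q}\equiv x_3 \pmod{I}$ to show that every monomial of $f^*$ involving $x_3$ is invariant under multiplication by $x_3^{q-1}$, so those terms cancel in characteristic $2$ and only the $x_3$-free part $h=f_0$ survives. Your explicit verification that $f_0$ lies in $R_P^{*}$ is a detail the paper leaves implicit, but it is the same argument.
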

\begin{proof} Let $(x_3^{q-1}+1)f+I$, $f \in R_p^*$ be an element of $ker \pi_{P_1} $.
 Since $x_3^q=x_3$, we get $x_3^{q-1}(x_0^ix_1^jx_2^kx_3^l) +I =
x_0^ix_1^jx_2^kx_3^l +I$, for $l \geq 1$ . Thus, any term of $f+I$
that contains $x_3$ is invariant under multiplication by
$x_3^{q-1}$. Hence, the terms with $x_3$ will disappear in the
expansion
$( x_3^{q-1}f+f) +I.$
So, we can find a polynomial $h$ without indeterminate $x_3$ and $(x_3^{q-1}+1)f+I=(x_3^{q-1}+1)h+I$.

\end{proof}

For the rest of the section we fix  an element $r + I$ of $ker \pi_{P_1} \cap
C(P,L) $. Let $r^*$ be its unique representative in $R_P^*$.
Since $r^*+I $ is in the kernel, $r^* = (1 +
x_3^{q-1})h(x_0,x_1,x_2) \text{ for some } h \in R_P^*$ . Since
$r^*+I$ is also in $C(P,L)$,  it is in the span of $\beta$, and
its terms have degrees $0, q-1 $ or $2(q-1)$.

\begin{lem} The degree of the digits of any non-constant monomial of $h$
 is 1.
\end{lem}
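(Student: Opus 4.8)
The plan is to exploit the factor $(1+x_3^{q-1})$ sitting in front of $h$. Multiplying a monomial of $h$ by $x_3^{q-1}$ attaches a copy of $x_3^{q-1}$, and since $q-1=2^t-1$ has a $1$ in each of its $t$ binary digits, this copy occupies one slot in \emph{every} digit position; the hope is that this saturation forces the remaining digits of the monomial to be linear.

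First I would translate the membership $r^*+I\in\langle\beta\rangle$ into a statement about individual monomials. Writing $r^*=h+x_3^{q-1}h$ and observing that every monomial of $h$ has $x_3$-degree $0$ while every monomial of $x_3^{q-1}h$ has $x_3$-degree $q-1$, the two sets of monomials are disjoint, so no cancellation between the two summands occurs. Hence each monomial $M=x_0^{a}x_1^{b}x_2^{c}$ of $h$ gives rise to a genuine monomial $x_3^{q-1}M$ of $r^*$. Since a linear combination can only cancel monomials, never create them, every monomial of $r^*$ must occur in the expansion of some element $[f_0,\ldots,f_{t-1}]$ of $\beta$. Next I would pin down which monomials can so occur: each digit $f_i$ lies in the defining list of $\beta$, and expanding the sole non-monomial digit $x_0x_3+x_1x_2$ (using the Frobenius, $(x_0x_3+x_1x_2)^{2^i}=(x_0x_3)^{2^i}+(x_1x_2)^{2^i}$) replaces it by either $x_0x_3$ or $x_1x_2$. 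After expansion every digit is therefore a monomial of degree $0$, $1$, or $2$, and each such $\beta$-element is already reduced in $R^*$. Consequently a monomial $x_0^{a}x_1^{b}x_2^{c}x_3^{d}$ can appear only if, at each binary position $i\in\{0,\ldots,t-1\}$, at most two of the exponents $a,b,c,d$ carry a $1$; equivalently, each digit of the monomial has degree at most $2$.

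Finally I would apply this necessary condition to $x_3^{q-1}M=x_0^{a}x_1^{b}x_2^{c}x_3^{q-1}$. Because $q-1$ has a $1$ in every binary position, the variable $x_3$ already supplies one $1$ at each position $i$, so the degree-$\le 2$ constraint forces $a_i+b_i+c_i\le 1$, where $a_i,b_i,c_i$ denote the $i$-th binary digits of $a,b,c$. Thus the digit $x_0^{a_i}x_1^{b_i}x_2^{c_i}$ of $M$ at each position has degree at most $1$, so every nonconstant digit of $M$ is a single variable, which is exactly the assertion. The step requiring the most care is the monomial-level bookkeeping: one must verify that the two terms $x_0x_3$ and $x_1x_2$ produced by expanding the special digit still have degree $2$, so that the uniform bound ``every digit of a $\beta$-monomial has degree at most $2$'' holds without exception, and that disjointness of $x_3$-degrees really prevents cancellation of $x_3^{q-1}M$. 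Granting these two points, the conclusion is immediate.
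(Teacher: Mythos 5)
Your overall strategy is the paper's own: decompose $r^* = h + x_3^{q-1}h$, note that the two summands have disjoint $x_3$-degrees so that $x_3^{q-1}M$ is a genuine monomial of $r^*$ for each monomial $M$ of $h$, and then use membership in the span of $\beta$ to bound digit degrees. Your observation that $x_3^{q-1}$ contributes a $1$ in every binary position, so the degree-$2$ bound per digit forces $a_i+b_i+c_i\le 1$, is in fact sharper than the paper's intermediate bound ($k_i\le 2$ there). But there is a genuine gap at the end: you prove only that every digit of $M$ has degree \emph{at most} $1$, while the lemma asserts that every digit of a nonconstant monomial has degree \emph{exactly} $1$ --- no digit may be constant. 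Your closing sentence slides from ``each digit has degree $\le 1$'' to ``every nonconstant digit of $M$ is a single variable, which is exactly the assertion''; that conflates nonconstant \emph{digits} with the nonconstant \emph{monomial} in the statement. The distinction has teeth: for $q=8$ the monomial $M=x_1^2=[1,x_1,1]$ passes every test you impose --- all digits of $x_3^{q-1}M=[x_3,x_1x_3,x_3]$ lie in the allowed list for $\beta$ --- yet $M$ has two constant digits and violates the lemma. Exactness is also what is needed downstream: Lemma 10 and Corollary 11 count exactly $q+1=1+2^t$ admissible monomials (digits drawn from $\{x_1,x_2\}$, plus the constant), and allowing a digit equal to $1$ would inflate that count and destroy the dimension computation.

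The missing ingredient is the total-degree constraint recorded just before Section 3.1: for $c+I\in C(P,L)$, the terms of $c^*$ have degree $0$, $q-1$, or $2(q-1)$. Since $M$ is nonconstant, $\deg(x_3^{q-1}M)=(q-1)+\deg(M)>q-1$, so this degree must be $2(q-1)$, giving $\deg(M)=q-1=2^t-1$. Combined with your bound $k_i\le 1$, the equation $\sum_i 2^i k_i = 2^t-1$ forces $k_i=1$ for all $i$; the paper reaches the same conclusion from its weaker bound $k_i\le 2$ by a repeated parity argument (oddness of $2^t-1$ gives $k_0=1$, divide by $2$, iterate). With this one addition your proof is complete, and your explicit care about cancellation --- both between $h$ and $x_3^{q-1}h$ via disjoint $x_3$-degrees, and the fact that linear combinations of expanded $\beta$-elements can cancel but never create monomials --- spells out two points the paper leaves implicit.
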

\begin{proof}Let $m$ be a non-constant monomial of $h$. Then $m=[g_0, g_1, \ldots , g_{t-1}]$ for some $g_i=
x_0^{n_{0,i}} x_1^{n_{1,i}}x_2^{n_{2,i}}$, where $n_{j,i}\in
\{0,1\}.$ Let $deg(g_i)=k_i$ for each $i$. Hence
$x_3^{q-1}m= [x_3g_0, x_3g_1, \ldots , x_3g_{t-1}]$ is a t-tuple
of a monomial of $r^*$. Since $r^*+I$ is in the span of $\beta$,
the digits of $x_3^{q-1}m$ cannot have degrees greater than 3. Thus, $k_i= 0$, $ 1$, or $2$  for each $i$. \\
Since $r^* +I$ is in $C(P,L)$, and $x_3^{q-1}m$ is a monomial of $r^*$, the degree of $x_3^{q-1}m$ is $q-1$ or $2(q-1)$. Since $m$ is nonconstant,  $
deg(m)= q-1.$ Hence, $k_0+2k_1+ \ldots
 2^{t-1}k_{t-1} = 2^t-1 .$ Since $2^t-1$ is an odd number,
 $k_0=1$. Then we get $ k_1+2k_2+\ldots +
 2^{t-2}k_{t-1}=2^{t-1}-1$ and so $k_1=1$. We repeat this process
 until we get $k_i=1$ for all $i$.

\end{proof}

\begin{lem} $h$ is in the span of the set $\{[1,1, \ldots ,1]\} \cup \{ [g_0, \ldots ,
g_{t-1}]| \text{ } g_i \in \{ x_1, x_2\}, \text{ for } 0 \leq i
\leq t \}.$
\end{lem}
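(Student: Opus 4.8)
The plan is to split $h$ into its constant part and its homogeneous part of degree $q-1$, and to show that the latter involves only the variables $x_1,x_2$ in its digits. Write $h = h_0 + \tilde h$, where $h_0 \in \{0,1\}$ is the coefficient of $[1,1,\ldots,1]$ and $\tilde h$ collects the non-constant monomials. By the preceding lemma each monomial of $\tilde h$ has all its digits of degree $1$, hence is of the form $[g_0,\ldots,g_{t-1}]$ with $g_i \in \{x_0,x_1,x_2\}$, and as $\prod_i g_i^{2^i}$ it has degree exactly $q-1$. It therefore suffices to prove that no monomial of $\tilde h$ uses $x_0$ in any digit.

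The key is to isolate the top-degree part of $r^*$. Since $r^* = (1+x_3^{q-1})h = h_0 + \tilde h + h_0 x_3^{q-1} + x_3^{q-1}\tilde h$ and $x_3^{q-1} = [x_3,x_3,\ldots,x_3]$, the component of $r^*$ of degree $2(q-1)$ is exactly $x_3^{q-1}\tilde h = \sum_c \lambda_c[x_3g_0,\ldots,x_3g_{t-1}]$, all of whose digits lie in $\{x_0x_3,\,x_1x_3,\,x_2x_3\}$. On the other hand $r^*+I$ lies in the span of $\beta$, and every element of $\beta$ is homogeneous of degree $\sum_i 2^i\deg f_i$; since $2(q-1)=\sum_i 2^i\cdot 2$ is the maximal such degree, the degree-$2(q-1)$ part of $r^*$ is a sum of those $\beta$-elements all of whose digits have degree $2$, i.e.\ with digits drawn from $\{x_0x_1,\,x_0x_2,\,x_1x_3,\,x_2x_3,\,x_0x_3+x_1x_2\}$. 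First I would record that among these, $x_1x_3$ and $x_2x_3$ occur only as the standalone digits of the same name, whereas $x_0x_3$ (and likewise $x_1x_2$) can arise only from the combined digit $x_0x_3+x_1x_2$.

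Now suppose, for contradiction, that some monomial $M$ of $x_3^{q-1}\tilde h$ has digit $x_0x_3$ on a non-empty set $S$ of positions (and digits $x_1x_3$ or $x_2x_3$ off $S$). By the observation above, the only $\beta$-element that can produce $M$ in an expansion $r^* = \sum_B \mu_B B$ is the element $B$ using $x_0x_3+x_1x_2$ at each position of $S$ and the matching digit $x_jx_3$ off $S$; hence the coefficient of $M$ in $r^*$ equals $\mu_B$, which must be the nonzero $\lambda_c$. But expanding $B$ over $\mathbb{F}_2$ also yields the companion monomial $M'$ obtained from $M$ by replacing $x_0x_3$ with $x_1x_2$ at every position of $S$, and $B$ is likewise the unique $\beta$-element producing $M'$. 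Thus $M$ and $M'$ have the same coefficient $\mu_B\ne 0$ in $r^*$. This is the contradiction: at each position of $S$ the digit of $M'$ is $x_1x_2$, which carries no factor $x_3$, so $M'$ cannot occur in $x_3^{q-1}\tilde h$, the entire degree-$2(q-1)$ part of $r^*$; hence the coefficient of $M'$ must be $0$. Therefore $S$ is empty for every monomial, every digit of $\tilde h$ lies in $\{x_1,x_2\}$, and $h = h_0[1,\ldots,1]+\tilde h$ has the asserted form.

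I expect the main obstacle to be the bookkeeping that pins down the producing $\beta$-element uniquely: one must check that no digit $x_0x_1$ or $x_0x_2$ can contribute to $M$ or $M'$, and that $x_0x_3+x_1x_2$ is genuinely the sole source of both $x_0x_3$ and $x_1x_2$. Once this forced pairing is in hand, the companion-monomial argument closes the gap at once. The same conclusion can alternatively be packaged as the tensor identity $U^{\otimes t}\cap V^{\otimes t} = (U\cap V)^{\otimes t}$ for the digit spaces $U=\langle x_0x_1,x_0x_2,x_1x_3,x_2x_3,x_0x_3+x_1x_2\rangle$ and $V=\langle x_0x_3,x_1x_3,x_2x_3\rangle$, whose intersection is $\langle x_1x_3,x_2x_3\rangle$.
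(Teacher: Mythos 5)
Your proposal is correct and follows essentially the same route as the paper's own proof: the paper likewise observes that a monomial of $r^*$ with digit $x_0x_3$ can only arise from the $\beta$-digit $x_0x_3+x_1x_2$, whence the companion monomial with $x_1x_2$ in its place also appears in $r^*$, contradicting the fact that every monomial of $r^* = x_3^{q-1}h + h$ has $x_3$-degree $0$ or $q-1$. The only cosmetic differences are that the paper swaps the digit at a single position rather than over your whole set $S$ and phrases the contradiction via $x_3$-degrees instead of isolating the degree-$2(q-1)$ homogeneous component; your explicit verification that the producing $\beta$-element is unique fills in a step the paper leaves implicit.
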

\begin{proof} It is enough to show that $h$ does not contain
 the variable $x_0$.

 Suppose one of the monomials, say $[g_0, \ldots ,
g_{t-1}]$, of $h$ has $x_0$ in it.
 So $g_i=x_0$ for some $i$.
 Then, $x_3^{q-1}[g_0,g_1, \ldots, x_0,\ldots ,
g_{t-1}]=[g_0x_3,g_1x_3, \ldots, x_0x_3,\ldots ,
g_{t-1}x_3]$ is a monomial in $r^*$. We know that $r^*$ is a linear combination
of the elements of $\beta$, so, the coefficient of
$[g_0x_3,g_1x_3, \ldots, x_0x_3 + x_1x_2,\ldots , g_{t-1}x_3]$
is non zero.  Hence, $r^*$ contains the monomial
$[g_0x_3,g_1x_3, \ldots, x_1x_2,\ldots , g_{t-1}x_3]$ also.
Note that the degree of $x_3$ in this monomial is  different from $0$ or  $q-1$.
However this is impossible since $r^* = x_3^{q-1}h+h$, the degree of $x_3$ in any monomial of $r^*$ is either $0$ or $q-1$.

\end{proof}
\begin{cor} $dim(ker \pi_{P_1} \cap C(P,L)) = q+1.$
\end{cor}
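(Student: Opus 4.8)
The plan is to prove the equality by matching an upper bound, read off from the preceding two lemmas, against a lower bound given by an explicit family of $q+1$ elements.

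For the upper bound I would first check that $r+I \mapsto h$ is a well-defined linear injection from $\ker\pi_{P_1}\cap C(P,L)$ into $R_P^*$. By the lemma writing kernel elements in the form $(1+x_3^{q-1})h$, every element of the intersection has unique reduced representative $r^*=(1+x_3^{q-1})h=h+x_3^{q-1}h$ with $h$ free of $x_3$; since the monomials of $h$ have $x_3$-degree $0$ while those of $x_3^{q-1}h$ have $x_3$-degree $q-1$, the polynomial $h$ is exactly the $x_3$-degree-$0$ part of $r^*$. Thus $h$ is determined by $r^*$, and conversely $r^*=(1+x_3^{q-1})h$ is determined by $h$, so the map is a linear isomorphism onto its image. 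By the preceding lemma this image lies in the span of $\{[1,\dots,1]\}\cup\{[g_0,\dots,g_{t-1}]\mid g_i\in\{x_1,x_2\}\}$. Expanding the $t$-tuples, $[1,\dots,1]=1$ and each remaining tuple equals $x_1^{a}x_2^{\,q-1-a}$ with $a=\sum_{i:g_i=x_1}2^{i}$, which runs through all of $0,1,\dots,q-1$ as the digits vary; these are $q$ distinct monomials. Hence the image has dimension at most $q+1$, and by injectivity $\dim(\ker\pi_{P_1}\cap C(P,L))\le q+1$.

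For the lower bound I would produce $q+1$ linearly independent elements of the intersection, namely the characteristic functions $\delta^*_{\ell}$ of the $q+1$ lines $\ell$ through $p_0$. Each such line is contained in $p_0^{\perp}$, so $\delta^*_{\ell}$ vanishes on $P_1$ and lies in $\ker\pi_{P_1}$; and $\delta^*_{\ell}\in C(P,L)$ since it is a generator. These functions are independent because distinct lines through $p_0$ meet only at $p_0$: evaluating a vanishing $k$-linear combination at a point of one line other than $p_0$, a point that lies on no other line of the family, forces the corresponding coefficient to vanish, and running over the family kills all coefficients. (As a check, each $\delta^*_{\ell}$ equals $(1+x_3^{q-1})(1+(\alpha x_1+\beta x_2)^{q-1})$ for suitable $\alpha,\beta\in k$, so its $h$ is $1+(\alpha x_1+\beta x_2)^{q-1}$; using that $\binom{q-1}{a}$ is odd for all $a$, these $q+1$ polynomials span the space isolated above.) Combining the two bounds gives $\dim(\ker\pi_{P_1}\cap C(P,L))=q+1$.

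The only nonroutine ingredients are the linearity and injectivity of $r\mapsto h$ and the monomial count, both of which are bookkeeping, together with the geometric facts that the lines through $p_0$ lie in $p_0^{\perp}$ and pairwise meet only at $p_0$; these are immediate from the definition of $p_0^{\perp}$. I do not expect a genuine obstacle, since the substantive work---confining $h$ to a $(q+1)$-dimensional space---is already accomplished by the earlier lemmas, and the family of lines through $p_0$ supplies the matching lower bound directly.
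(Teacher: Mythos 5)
Your proposal is correct and follows essentially the same route as the paper: the paper's proof obtains the lower bound from the set $X$ of characteristic functions of the $q+1$ lines through $p_0$ (exactly your family) and the upper bound from Lemma 11 via the representation $(1+x_3^{q-1})h+I$. Your only additions are bookkeeping the paper leaves implicit---verifying that $r+I\mapsto h$ is a well-defined linear injection and counting the $q+1$ monomials $1,\,x_1^{a}x_2^{q-1-a}$---which tighten rather than change the argument.
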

\begin{proof} Since $X \subseteq ker \pi_{P_1}\cap C(P,L)$, and
elements of $X$ are linearly independent,  $dim(ker \pi_{R_{P_1}}
\cap C) \geq q+1.$

Any element of $ker \pi_{R_{P_1}}\cap C(P,L)$ is of the form $
(1+x_3^{q-1})h + I$ , where, by the previous lemma, $h$ lies in
space of dimension at most $q+1$. Thus, $dim(ker \pi_{P_1} \cap C(P,L))
= q+1.$

\end{proof}
Following lemma was proven in [8], the proof works the same for
the even case also.\\
\begin{lem} $ker \pi_{P_1} \cap C(P,L_1)$ has dimension $q-1$
and having as basis the set of functions $\chi_{\ell}-\chi_{\ell^{'}}$ where
$\ell \not = \ell_0$ is an arbitrary but fixed line through $p_0$
and $\ell^{'}$ varies over the $q-1$ lines through $p_0$ different
from $\ell_0$ and $\ell$.
\end{lem}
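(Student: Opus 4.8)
The plan is to establish the two bounds $\dim(\ker\pi_{P_1}\cap C(P,L_1))\ge q-1$ and $\dim(\ker\pi_{P_1}\cap C(P,L_1))\le q-1$ separately, and to check along the way that the displayed functions are a basis. For the lower bound I would first verify that each proposed vector lies in the intersection. Fix a line $\ell\ne\ell_0$ through $p_0$ and let $\ell'$ range over the $q-1$ lines through $p_0$ distinct from $\ell_0$ and $\ell$. Since every line through $p_0$ has all of its points in $p_0^{\perp}$, the function $\chi_\ell+\chi_{\ell'}$ is supported on $p_0^{\perp}$ and therefore vanishes on $P_1$, giving membership in $\ker\pi_{P_1}$; membership in $C(P,L_1)$ is exactly the content of the grid lemma (Lemma 5) applied to the two lines $\ell,\ell'$ through $p_0\in\ell_0$. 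Independence of these $q-1$ vectors is then routine: in a relation $\sum_{\ell'\in S}(\chi_\ell+\chi_{\ell'})=0$ with $S\ne\emptyset$, evaluating at a point of some $\ell''\in S$ other than $p_0$ isolates the single line $\ell''$, because distinct lines through $p_0$ meet only at $p_0$; the value there is $1$, a contradiction. This yields the lower bound.

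The heart of the argument is the matching upper bound, and here I would lean on the preceding corollary that $\ker\pi_{P_1}\cap C(P,L)$ has dimension $q+1$. The set $X$ of the $q+1$ lines through $p_0$ is a linearly independent subset of this intersection of exactly that cardinality, hence a basis of it; so any $f\in\ker\pi_{P_1}\cap C(P,L_1)$ admits a unique expression $f=\sum_{m\in X}c_m\chi_m$. I would pin the coefficients down with two linear functionals that kill all of $C(P,L_1)$, namely evaluations at points of $\ell_0$: if $\ell'\in L_1$ then $\ell'$ misses $\ell_0$, so $\chi_{\ell'}$ is zero at every point of $\ell_0$, and hence so is every element of $C(P,L_1)$. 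Evaluating $f$ at $p_0$, which lies on every line of $X$, gives $\sum_{m\in X}c_m=0$; evaluating $f$ at a point $p\in\ell_0\setminus\{p_0\}$, which lies on no line of $X$ other than $\ell_0$, gives $c_{\ell_0}=0$. Together these place $f$ in the subspace of $\langle X_0\rangle$ cut out by $\sum_{m\in X_0}c_m=0$, which has dimension $q-1$.

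Combining the bounds gives dimension exactly $q-1$; since the displayed functions are $q-1$ independent elements of a space of that dimension, they form a basis, which finishes the proof. I expect the only genuine obstacle to be the upper bound, and inside it the observation that point-evaluations along $\ell_0$ are precisely the functionals that cut $C(P,L_1)$ down to the right codimension. The lower bound is little more than the grid lemma packaged with a transparent support computation, and the entire argument rests on already having the $(q+1)$-dimensional description of $\ker\pi_{P_1}\cap C(P,L)$ from the preceding corollary.
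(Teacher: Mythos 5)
Your proposal is correct and follows essentially the same route as the paper: the lower bound comes from Lemma 5 (the grid lemma) applied at $p_0$ plus a routine independence check, and the upper bound combines Corollary 12 with the fact that $C(P,L_1)$ vanishes on $\ell_0$. The only cosmetic difference is that you make the basis $X$ of $\ker\pi_{P_1}\cap C(P,L)$ explicit and impose the two point-evaluations at $p_0$ and at a point of $\ell_0\setminus\{p_0\}$, whereas the paper phrases the same constraint as the restriction map to $\ell_0$ having a $2$-dimensional image spanned by the images of $\chi_{\ell_0}$ and $\chi_{p_0}$, forcing codimension at least $2$.
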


\begin{proof} By lemma 5 applied to $p_0$, we see that if $\ell$
and $\ell^{'}$ are any two lines through $p_0$ other than
$\ell_0$, the function $\chi_{\ell}-\chi_{\ell^{'}}$ lies in
$C(P,L_1)$. It is also in $ker\pi_{P_1}$. Thus, we can find $q-1$
linearly independent functions of this kind as described in the
statement. Then the dimension of $ ker\pi_{P_1} \cap C(P,L_1) $ is greater than or equal to
$q-1$. On the other hand, since none of the lines in $L_1$ has a
common point with $\ell_0$,  $C(P,L_1)$ is in the kernel of the
restriction map to $\ell_0$, while the image of the restriction of
$ ker\pi_{P_1} \cap C(P,L)$ to $\ell_0$ has dimension 2, spanned
by the images of $\chi_{\ell_0}$ and $\chi_{p_0}$. Thus, $
ker\pi_{P_1} \cap C(P,L_1)$ has codimension at least 2 in
$ker\pi_{P_1} \cap C(P,L)$, which has dimension $q+1$, by
Corollary 12. Hence, $$dim \left(ker\pi_{P_1} \cap C(P,L_1) \right) \leq q-1.$$

\end{proof}

\begin{cor}The spans of $Z \cup X_0$ and $L_1 \cup X_0$ are the same.
\end{cor}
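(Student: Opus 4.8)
The plan is to prove the two spans coincide by a dimension count, since one inclusion is immediate. Because $Z \subseteq C(P,L_1) = \langle L_1 \rangle$, we have $\langle Z \cup X_0 \rangle \subseteq \langle L_1 \cup X_0 \rangle$ at once; it then suffices to show both spaces have the same dimension.

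First I would compute $\dim \langle L_1 \cup X_0 \rangle$. Writing it as $\langle X_0 \rangle + C(P,L_1)$ and applying the inclusion--exclusion formula for dimensions, the problem reduces to identifying the intersection $\langle X_0 \rangle \cap C(P,L_1)$. Here is the key point: since $X \subseteq \ker \pi_{P_1}$, we have $\langle X_0 \rangle \subseteq \ker \pi_{P_1}$, so $\langle X_0 \rangle \cap C(P,L_1) \subseteq \ker \pi_{P_1} \cap C(P,L_1)$; conversely, the explicit basis of $\ker \pi_{P_1} \cap C(P,L_1)$ produced in Lemma 14 consists of the differences $\chi_\ell - \chi_{\ell'}$ of lines through $p_0$ other than $\ell_0$, which are precisely differences of elements of $X_0$ and hence lie in $\langle X_0 \rangle$. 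Thus $\langle X_0 \rangle \cap C(P,L_1) = \ker \pi_{P_1} \cap C(P,L_1)$ has dimension $q-1$. Since $X_0$ is linearly independent by Lemma 3 with $|X_0| = q$, this yields $\dim \langle L_1 \cup X_0 \rangle = q + \dim C(P,L_1) - (q-1) = \dim C(P,L_1) + 1$.

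Next I would express $\dim C(P,L_1)$ through the projection. The restriction of $\pi_{P_1}$ to $C(P,L_1)$ has image $C(P_1,L_1)$ and kernel $\ker \pi_{P_1} \cap C(P,L_1)$, so rank--nullity gives $\dim C(P,L_1) = \dim C(P_1,L_1) + (q-1)$, again invoking Lemma 14. Therefore $\dim \langle L_1 \cup X_0 \rangle = \dim C(P_1,L_1) + q$. It then remains to compute $\dim \langle Z \cup X_0 \rangle$ and verify it equals this value. Since $Z$ maps bijectively under $\pi_{P_1}$ onto a basis of $C(P_1,L_1)$, the image $\pi_{P_1}(Z)$ is independent, so $\langle Z \rangle \cap \ker \pi_{P_1} = 0$; as $\langle X_0 \rangle \subseteq \ker \pi_{P_1}$, it follows that $\langle X_0 \rangle \cap \langle Z \rangle = 0$ and the two summands are independent. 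Hence $\dim \langle Z \cup X_0 \rangle = |X_0| + |Z| = q + \dim C(P_1,L_1)$, matching the dimension found for $\langle L_1 \cup X_0 \rangle$. Combined with the trivial inclusion above, the two spans are equal.

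The main obstacle is the identification carried out in the first step: one must see that $\langle X_0 \rangle$ meets $C(P,L_1)$ in exactly the kernel intersection, and the cleanest way to see this is to read off the basis supplied by Lemma 14 as consisting of differences of the generators in $X_0$. Everything else is routine bookkeeping with rank--nullity and with the fact that $X_0$ lives in $\ker \pi_{P_1}$ while $Z$ injects into $C(P_1,L_1)$.
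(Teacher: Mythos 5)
Your proof is correct, and it reaches the corollary by a genuinely different route from the paper's. The paper argues element by element: given $\alpha$ in the span of $L_1$, it uses the fact that $Z$ maps bijectively onto a basis of $C(P_1,L_1)$ to produce $\alpha'$ in the span of $Z$ with $\pi_{P_1}(\alpha)=\pi_{P_1}(\alpha')$, so that $\alpha-\alpha'\in\ker\pi_{P_1}\cap C(P,L_1)$, and then invokes Lemma 14 only through the containment $\ker\pi_{P_1}\cap C(P,L_1)\subseteq\langle X_0\rangle$, read off from the explicit basis of differences $\chi_{\ell}-\chi_{\ell'}$; this gives the nontrivial inclusion $\langle L_1\cup X_0\rangle\subseteq\langle Z\cup X_0\rangle$ outright, the reverse being trivial as you observe. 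You instead prove the trivial inclusion and then equalize dimensions, which requires more input: the exact value $q-1$ from Lemma 14 (not just its basis description), the linear independence of $X_0$ from Lemma 3, rank--nullity applied to $\pi_{P_1}$ restricted to $C(P,L_1)$, and the directness $\langle X_0\rangle\cap\langle Z\rangle=0$ deduced from $\langle X_0\rangle\subseteq\ker\pi_{P_1}$ together with the independence of $\pi_{P_1}(Z)$. All of these steps check out; in particular your key identification $\langle X_0\rangle\cap C(P,L_1)=\ker\pi_{P_1}\cap C(P,L_1)$ is sound, with the forward inclusion coming from $X\subseteq\ker\pi_{P_1}$ and the reverse from reading Lemma 14's basis as differences of elements of $X_0$ --- the same observation the paper relies on. What your dimension count buys is the explicit value $\dim\langle Z\cup X_0\rangle=\dim C(P_1,L_1)+q$, which anticipates the bound $\dim C(P,L)\leq \dim C(P_1,L_1)+2q$ that the paper states immediately after the corollary; what the paper's element-chase buys is brevity and economy of hypotheses, since it never needs $X_0$ to be independent nor the dimension $q-1$, only the containment of the kernel intersection in $\langle X_0\rangle$.
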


\begin{proof} Let $\alpha$ be an element in the span of $L_1$. Since $Z$ maps to a basis of $C(P_1,L_1)$, there
is an element $\alpha^{'}$ in the span of $Z$ so that  $\pi_{P_1}(\alpha)=\pi_{P_1}(\alpha^{'}).$ Hence,
$\alpha - \alpha^{'} \in ker\pi_{P_1} \cap C(P,L_1)$.
By the previous lemma $ker\pi_{P_1} \cap C(P,L_1)$ is contained in the
span of $X_0$. Hence, we conclude that $\alpha$ is contained in the span of $X_0 \cup Z$.

\end{proof}

Therefore, $Z \cup X_0 \cup Y$ spans $C(P,L)$ as a vector space. So,
$dim(C(P,L))\leq dim(C(P_1,L_1))+2q$ and this implies $dim LU(3,q)
= q^3-dim(C(P,L))+2q.$ \\

\vspace{1cm}

\textsc{Acknowledgement:} \textit{I am grateful to Peter Sin for his constant support and encouragement. I would like to thank Stanley Payne for his interest and helpful remarks.I also would like to thank to Qing Xiang for his comments on the proof of lemma 8.}
\vspace{1cm}

\textsc{References:}\\

[1] B. Bagchi, A.E. Brouwer, and H.A. Wilbrink, \emph{Notes on binary
codes related to the O(5,q) generalized quadrangle for odd q},
Gemonetriae Dedicata, vol. 39, 1991 , pp. 339-355.

[2] D.B. Chandler, P. Sin, Q. Xiang, \emph{Incidence modules for symplectic spaces in characteristic two}, preprint, arXiv:math/0801.4392v1.

[3] R. G. Gallager, \emph{Low-density parity-check codes}, IRE
Trans. Inform. Theory, vol. IT-8, Jan. 1962, pp.21-28.

[4] J.-L. Kim, U. Peled, I. Pereplitsa, V. Pless, and S.
Friedland, \emph{Explicit construction of LDPC codes with no
4-cycles}, IEEE Trans. Inform. Theory, vol. 50, 2004, pp. 2378-2388.

[5] F. Lazebnik and V.A. Ustimenko, \emph{Explicit construction of
graphs with arbitrarily large girth and of size},
Discrete Applied Math., vol. 60, 1997, pp. 275-284.

[6] S.E. Payne, J.A. Thas, \emph{Finite Generalized Quadrangles}, Pittman Advanced Publishing Program,  Boston, London, Melbourne, 1984.

[7] N.S.N. Sastry , P. Sin, \emph{The code of a regular
generalized quadrangle of even order}, Group Representations:
Cohomology, Group Actions and Topology, ser. Proc. Symposia in
Pure Mathematics, vol. 63, 1998, pp. 485-496.

[8] P. Sin, Q. Xiang, \emph{On the dimensions of certain LDPC
codes based on q-regular bipartite graphs},  IEEE Trans.
Inform. Theory, vol. 52 (8), 2006, pp. 3735-3737.

\vspace{3cm}

\textsc{Department of Mathematics, University Of Florida, Gainesville, FL, 32611, USA}

\textit{E-mail Address}: ogul@math.ufl.edu

\end{document}